\newtheorem{theorem}{Theorem}[section]
\theoremstyle{definition}
\newtheorem{definition}[theorem]{Definition}
\newtheorem{proposition}[theorem]{Proposition}
\theoremstyle{remark}
\newtheorem{remark}[theorem]{Remark}
\numberwithin{equation}{section}
\begin{document}

% \title[short text for running head]{full title}
\title{A New Fractional Derivative with Classical Properties}

\dedicatory{The paper is dedicated to my school teacher \\ T.H. Gunaratna Silva} 
%    Only \author and \address are required; other information is
%    optional.  Remove any unused author tags.

%    author one information
\author[U.N. Katugampola]{Udita N. Katugampola}
%\author{}
\address{Department of Mathematics, University of Delaware, Newark DE 19716, USA.}
\thanks{Department of Mathematical Sciences, University of Delaware, Newark DE 19716, USA}
\thanks{Email: uditanalin@yahoo.com \hfill Preprint submitted to J. Amer. Math. Soc.}
%\curraddr{}
\email{uditanalin@yahoo.com}
%\thanks{Preprint submitted to J. Amer. Math. Soc.}

%    \subjclass is required.
\subjclass[2010]{23A33 }
\keywords{Fractional Derivatives, Classical Calculus, Product Rule, Quotient Rule, Chain Rule, Rolle's theorem, Mean Value Theorem}

%\date{mmm dd, 2014} %October 18, 2014

%    Abstract is required.
\begin{abstract}
We introduce a new fractional derivative which obeys classical properties including: linearity, product rule, quotient rule, power rule, chain rule, vanishing derivatives for constant functions, the Rolle's Theorem and the Mean Value Theorem. The definition, 
\vspace{-.13cm}
\[
D^\alpha (f)(t) = \lim_{\epsilon \rightarrow 0} \frac{f(te^{\epsilon t^{-\alpha}}) - f(t)}{\epsilon},
\]
%which was motivated by the works of Khalil et al. \cite{Khalil}, 
is the most natural generalization that uses the limit approach. For $0\leq \alpha < 1$, it generalizes the classical calculus properties of polynomials. Furthermore, if $\alpha = 1$, the definition is equivalent to the classical definition of the first order derivative of the function $f$. Furthermore, it is noted that there are $\alpha-$differentiable functions which are not differentiable. %we give some applications to fractional differential equations. 
\end{abstract}

\maketitle

%%%%%%%%%%%%%%%%%%%%%%%%%%%%%%%%%%%%%%%%%%%%%%%%%%%%%%%%%%%%%%%%%%%%%%%%

%    Templates for common elements of a journal article; for additional
%    information, see the AMS-LaTeX instructions manual, instr-l.pdf,
%    included in the JAMS author package, and the amsthm user's guide,
%    linked from http://www.ams.org/tex/amslatex.html .

%    Section headings

\section{Introduction}
The derivative of non-integer order has been an interesting research topic for several centuries. The idea was motivated by the question, ``What does it mean by $\frac{d^n f}{dx^n}$, if $n=\frac{1}{2}$ ?'', asked by L'Hospital in 1695 in his letters to Leibniz \cite{letter1,letter2, letter3}. Since then, the mathematicians tried to answer this question for centuries in several points of view. The outcomes are many folds. Various types of fractional derivatives were introduced: Riemann-Liouville, Caputo, Hadamard, Erd\'{e}lyi-Kober, Gr\"{u}nwald-Letnikov, Marchaud and Riesz are just a few to name \cite{udita1,udita2,key-9,key-8,key-2,key-12}.  Most of the fractional derivatives are defined via fractional integrals \cite{key-9}. Due to the same reason, those fractional derivatives inherit some non-local behaviors, which lead them to many interesting applications including memory effects and future dependence \cite{what}.

Among the inconsistencies of the existing fractional derivatives are:

\begin{enumerate}

\item[(1)] Most of the fractional derivatives except Caputo-type derivatives, do not satisfy $D_a^\alpha(1) = 0$ , if $\alpha$ is not a natural number. 

\item[(2)] All fractional derivatives do not obey the familiar Product Rule for two functions:
             \[
						    D_a^\alpha (fg) = fD_a^\alpha (g) + gD_a^\alpha(f).
						 \]
\item[(3)] All fractional derivatives do not obey the familiar Quotient Rule for two functions:
             \[
						    D_a^\alpha \Big(\frac{f}{g}\Big) = \frac{gD_a^\alpha (f) - fD_a^\alpha(g)}{g^2}.
						 \]
\item[(4)] All fractional derivatives do not obey the Chain Rule:
             \[
						    D_a^\alpha (f \circ g)(t) = f^{(\alpha)}\big(g(t)\big) \, g^{(\alpha)}(t). 
						 \]	
\item[(5)] Fractional derivatives do not have a corresponding Rolle's Theorem.

\item[(6)] Fractional derivatives do not have a corresponding Mean Value Theorem.	
					
\item[(7)] All fractional derivatives do not obey: $D_a^\alpha D_a^\beta f = D_a^{\alpha + \beta} f$, in general.

\item[(8)] The Caputo definition assumes that the function $f$ is differentiable. 					

\end{enumerate}

To overcome some of these and other difficulties, Khalil et al. \cite{Khalil}, came up with an interesting idea that extends the familiar limit definition of the derivative of a function given by the following. 
\begin{definition}\cite{Khalil}
Let $f:[0, \infty) \rightarrow \mathbb{R}$ and $t > 0$. Then the ``conformable fractional derivative'' of $f$ of order $\alpha$ is defined by,
\begin{equation}
    f^{(\alpha)}(t) = \lim_{\epsilon \rightarrow 0} \frac{f(t + \epsilon t^{1-\alpha}) - f(t)}{\epsilon},
		\label{def1}
\end{equation}
for $t >0, \; \alpha \in (0, 1)$. If $f$ is $\alpha-$differentiable in some $(0, a), \; a >0$, and $\lim_{t \rightarrow 0^+} f^{(\alpha)}(t)$ exists, then define
\[
f^{(\alpha)}(0) = \lim_{t \rightarrow 0^+} f^{(\alpha)}(t). 
\]
\end{definition}

As a consequence of the above definition, the authors in \cite{Khalil}, showed that the $\alpha-$derivative in (\ref{def1}), obeys the Product rule, Quotient rule and has results similar to the Rolle's Theorem and the Mean Value Theorem in classical calculus. The purpose of this work is to further generalize the results obtained in \cite{Khalil} and introduce a new fractional derivative as the most natural extension of the familiar limit definition of the derivative of a function $f$ at a point. 

\section{New Fractional Derivative}
In this section, we give the main definition of the paper and obtain several results that are close resemblance of the results found in classical calculus. We prove that the new fractional derivative satisfies the Product rule, Quotient rule, Chain rule and have results which are natural extensions of the Rolle's theorem and the Mean Value Theorem. To this end, we start with the following definition, which is a generalization of the limit definition of the derivative. 
\begin{definition} 
Let $f:[0, \infty) \rightarrow \mathbb{R}$ and $t > 0$. Then the \emph{fractional derivative} of $f$ of order $\alpha$ is defined by,
\begin{equation}
    \mathcal{D}^\alpha (f)(t) = \lim_{\epsilon \rightarrow 0} \frac{f(te^{\epsilon t^{-\alpha}}) - f(t)}{\epsilon},
		\label{df}
\end{equation}
for $t >0, \; \alpha \in (0, 1)$. If $f$ is $\alpha-$differentiable in some $(0, a), \; a >0$, and $\lim_{t \rightarrow 0^+} \mathcal{D}^\alpha (f)(t)$ exists, then define
\[
\mathcal{D}^\alpha (f)(0) = \lim_{t \rightarrow 0^+} \mathcal{D}^\alpha (f)(t). 
\]
\end{definition}

The first result is the generalization of Theorem 2.1 of \cite{Khalil}. 
%    Ordinary theorem and proof
\begin{theorem}
If a function $f:[0, \infty) \rightarrow \mathbb{R}$ is $\alpha-$differentiable at $a >0, \; \alpha \in (0, 1]$, then $f$ is continuous at $a$. 
\end{theorem}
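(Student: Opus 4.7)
The plan is to imitate the standard one-line deduction that differentiability implies continuity, adapted to the exponential increment appearing in the definition. Concretely, I want to show $\lim_{t \to a} f(t) = f(a)$ by writing $f(t) - f(a)$ as (difference quotient) $\times$ $\epsilon$, and letting $\epsilon \to 0$.

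First I would introduce the change of variable $t = a e^{\epsilon a^{-\alpha}}$, equivalently $\epsilon = a^\alpha \ln(t/a)$. Because $a > 0$, the map $\epsilon \mapsto a e^{\epsilon a^{-\alpha}}$ is a smooth bijection from a neighborhood of $0$ onto a neighborhood of $a$, and $\epsilon \to 0$ if and only if $t \to a$. This legitimizes passing between the two limits.

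Next, for $\epsilon \ne 0$ I would write the algebraic identity
\[
f\!\bigl(a e^{\epsilon a^{-\alpha}}\bigr) - f(a) \;=\; \epsilon \cdot \frac{f\!\bigl(a e^{\epsilon a^{-\alpha}}\bigr) - f(a)}{\epsilon}.
\]
Taking $\epsilon \to 0$ and using the hypothesis that $\mathcal{D}^\alpha(f)(a)$ exists as a finite limit, the right-hand side tends to $0 \cdot \mathcal{D}^\alpha(f)(a) = 0$. Translating back via the change of variable gives $\lim_{t \to a} \bigl(f(t) - f(a)\bigr) = 0$, which is continuity at $a$.

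There is essentially no obstacle here; the only thing to be slightly careful about is the edge case $\alpha = 1$, where the formula still makes sense because $a > 0$ ensures $a^{-\alpha}$ is finite, and the bijectivity of the substitution near $\epsilon = 0$ is unaffected. I would mention this briefly so that the statement is covered uniformly for $\alpha \in (0,1]$, and note that the argument fails (as expected) at $a = 0$, which is why the theorem is stated for $a > 0$.
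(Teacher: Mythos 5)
Your proof is correct and follows essentially the same route as the paper: both write $f(ae^{\epsilon a^{-\alpha}})-f(a)$ as the difference quotient times $\epsilon$ and let $\epsilon\to 0$. The only (cosmetic) difference is that you justify the passage back to $\lim_{t\to a}f(t)=f(a)$ via the explicit substitution $\epsilon=a^{\alpha}\ln(t/a)$, whereas the paper uses the increment $h=\epsilon a^{1-\alpha}+O(\epsilon^{2})$; your version is, if anything, slightly more careful on that step.
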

\begin{proof}
Since $f(ae^{\epsilon a^{-\alpha}})-f(a) = \frac{f(ae^{\epsilon a^{-\alpha}})-f(a)}{\epsilon} \epsilon$, we have
\[
  \lim_{\epsilon \rightarrow 0} \big[f(ae^{\epsilon a^{-\alpha}})-f(a)\big] = \lim_{\epsilon \rightarrow 0} \frac{f(ae^{\epsilon a^{-\alpha}})-f(a)}{\epsilon} .\lim_{\epsilon \rightarrow 0}\epsilon. 
\]
Let $h= \epsilon a^{1-\alpha} + O(\epsilon^2)$. Then, $\lim_{h \rightarrow 0} \big[f(a+h)-f(a)\big] = \mathcal{D}^\alpha (f)(a) \cdot 0 = 0$, which, in turn, implies that $\lim_{h \rightarrow 0} f(a+h) = f(a)$. This completes the proof. 
\end{proof}
The following is the main result of this paper. 

%%%%%%%%%%%%%%%%%%%%%%%%%%%%%%%%%%%%%%%%%%%%%%%%%%%%%%%%%%%%%%%%%%%%%%%%%%%%%%%%%%%%%%%%%%%%%%%%%%%%%%%%%%%%%%%%%%%%%%%%%%%%%%%%%%%%%%%%%%%%%%%%%%%%%%%%%%%
\vspace{.2cm} %%%%%%%%%%%%%%%%%%%%%%%%%%%%%%%%%%%%%%%%%%%%%%%%%%%%%%%%%%%%%%%%%%%%%%%%%%%%%%%%%%%%%%%%%%%%%%%%%%%%%%%%%%%%%%%%%%%%%%%%%%%%%%%%%%%%%%%%%%%%%
%%%%%%%%%%%%%%%%%%%%%%%%%%%%%%%%%%%%%%%%%%%%%%%%%%%%%%%%%%%%%%%%%%%%%%%%%%%%%%%%%%%%%%%%%%%%%%%%%%%%%%%%%%%%%%%%%%%%%%%%%%%%%%%%%%%%%%%%%%%%%%%%%%%%%%%%%%%

\begin{theorem} \label{thm3}
Let $\alpha \in (0, 1]$ and $f, g$ be $\alpha-$differentiable at a point $t>0$. Then,
\begin{enumerate}
	\item[(1)] $\mathcal{D}^\alpha \big[af +bg\big] = a\mathcal{D}^\alpha (f) + b\mathcal{D}^\alpha (g)$, for all $a, b \in \mathbb{R}$.
	
	\item[(2)] $\mathcal{D}^\alpha \big(t^n \big) = nt^{n-\alpha}$ for all $n \in \mathbb{R}$. 
	
	\item[(3)] $\mathcal{D}^\alpha \big(C \big) = 0,$ for all constant functions, $f(t) = C.$
	
	\item[(4)] $\mathcal{D}^\alpha \big(fg) = f\mathcal{D}^\alpha (g) + g\mathcal{D}^\alpha (f)$.
	
	\item[(5)] $\mathcal{D}^\alpha \big(\frac{f}{g}) = \frac{g\mathcal{D}^\alpha (f) - f\mathcal{D}^\alpha (g)}{g^2}$.
	
	\item[(6)] $\mathcal{D}^\alpha (f \circ g)(t) = f^\prime\big(g(t)\big) \, \mathcal{D}^\alpha g(t)$, \;\mbox{for} $f$ \mbox{differentiable at }$g(t)$. 
	
	\item[(7)] If, in addition, $f$ is differentiable, then $\mathcal{D}^\alpha \big(f)(t) = t^{1-\alpha}\frac{df}{dt}(t).$
\end{enumerate}
%where $\lceil \cdot \rceil$ is the ceiling function. 
\end{theorem}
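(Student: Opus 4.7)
The core observation driving the entire theorem is the first-order expansion $te^{\epsilon t^{-\alpha}} = t + \epsilon t^{1-\alpha} + O(\epsilon^2)$, which says that the new fractional difference quotient is, up to a factor of $t^{1-\alpha}$, just the classical Newton quotient at $t$ with increment $h = te^{\epsilon t^{-\alpha}} - t$. The plan is to prove the seven parts essentially in the order stated, carrying each classical argument across under the substitution $u := te^{\epsilon t^{-\alpha}}$, and reserving (7) for a clean one-line substitution at the end. One discipline to maintain throughout: parts (1)--(6) do not assume classical differentiability of $f$ and $g$, only $\alpha$-differentiability, so I cannot shortcut them by invoking (7).

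Parts (1)--(3) are direct. Linearity follows from the linearity of the limit applied to the defining quotient. For (2), I substitute $f(t)=t^n$, factor out $t^n$, and use the elementary limit $\lim_{\epsilon\to 0}(e^{c\epsilon}-1)/\epsilon = c$ with $c = nt^{-\alpha}$ to obtain $nt^{n-\alpha}$. The constant rule (3) is the $n=0$ case of (2), or a trivial direct check.

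For the product rule (4) I apply the usual add-and-subtract trick,
\[
\frac{f(u)g(u)-f(t)g(t)}{\epsilon} \;=\; g(u)\cdot\frac{f(u)-f(t)}{\epsilon} \;+\; f(t)\cdot\frac{g(u)-g(t)}{\epsilon},
\]
and pass to the limit, noting that $g(u)\to g(t)$ as $\epsilon\to 0$ since $\alpha$-differentiability of $g$ at $t$ forces continuity there by the previous theorem. The quotient rule (5) follows either by the same add-and-subtract device on $f(u)/g(u)-f(t)/g(t)$, or by first establishing $\mathcal{D}^\alpha(1/g) = -\mathcal{D}^\alpha(g)/g^2$ and combining with (4). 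For (7), setting $h(\epsilon) := t(e^{\epsilon t^{-\alpha}}-1)$, one has $h(\epsilon)\to 0$ and $h(\epsilon)/\epsilon\to t^{1-\alpha}$, hence
\[
\frac{f(t+h)-f(t)}{\epsilon} \;=\; \frac{f(t+h)-f(t)}{h}\cdot\frac{h}{\epsilon} \;\longrightarrow\; f'(t)\,t^{1-\alpha}.
\]

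The chain rule (6) is where I expect the one real obstacle. With $u=te^{\epsilon t^{-\alpha}}$, the natural factorization
\[
\frac{f(g(u))-f(g(t))}{\epsilon} \;=\; \frac{f(g(u))-f(g(t))}{g(u)-g(t)}\cdot\frac{g(u)-g(t)}{\epsilon}
\]
sends the first factor to $f'(g(t))$ via differentiability of $f$ and continuity of $g$, and the second factor to $\mathcal{D}^\alpha(g)(t)$. The nuisance is that $g(u)$ may coincide with $g(t)$ along a sequence of $\epsilon$'s tending to $0$, in which case the factorization above is literally undefined; this is the same subtlety present in the classical chain rule. I would resolve it in the standard Carath\'eodory manner: write $f(y)-f(g(t)) = \varphi(y)\,(y-g(t))$ with $\varphi$ continuous at $g(t)$ and $\varphi(g(t))=f'(g(t))$, then pass to the limit in $\varphi(g(u))\cdot(g(u)-g(t))/\epsilon$. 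That bookkeeping aside, no step is genuinely hard; the whole theorem is a systematic transplantation of the first-year differentiation rules through the substitution $u = te^{\epsilon t^{-\alpha}}$.
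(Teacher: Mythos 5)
Your proposal is correct and follows essentially the same route as the paper: parts (1)--(5) and (7) by transplanting the classical arguments through the substitution $u=te^{\epsilon t^{-\alpha}}$ with $u-t=\epsilon t^{1-\alpha}+O(\epsilon^2)$, exactly as done there. For the chain rule, your Carath\'eodory factorization $f(y)-f(g(t))=\varphi(y)\,(y-g(t))$ is the same device as the paper's first argument (the increment identity $\Delta y=[f'(b)+\epsilon_1]\Delta u$ with $\epsilon_1:=0$ when $\Delta u=0$), and it handles the case $g(u)=g(t)$ more carefully than the paper's second, purely limit-based argument, which assumes that non-constancy of $g$ near $a$ forces $g(x_1)\neq g(x_2)$ there.
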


\begin{proof}
 Part (1) and (3) follow directly from the definition. We shall prove (2), (4), (6) and (7), since the proofs are different from what appear in \cite{Khalil}. 
Now, for fixed $\alpha \in (0, 1]$, $n \in \mathbb{R}$ and $t>0$, we have
\begin{align}
  \mathcal{D}^\alpha \big(t^n\big) &= \lim_{\epsilon \rightarrow 0} \frac{(te^{\epsilon t^{-\alpha}})^n- t^n}{\epsilon} \nonumber  \\
	                                &= t^n\lim_{\epsilon \rightarrow 0} \frac{e^{n\epsilon t^{-\alpha}}-1}{\epsilon} \nonumber \\ 
																	&= t^n\lim_{\epsilon \rightarrow 0} \frac{\epsilon n t^{-\alpha}+\frac{\epsilon^2 n^2 t^{-2\alpha}}{2!} +\cdots}{\epsilon} \nonumber \\ 
																	&= t^n\lim_{\epsilon \rightarrow 0} \frac{\epsilon n t^{-\alpha}+ O(\epsilon^2)}{\epsilon} \label{eq1} \\
																	&=nt^{n-\alpha}. \nonumber
\end{align}
This completes the proof of (2). Then, we shall prove (4). To this end, since $f, g$ are $\alpha-$differentiable at $t >0,$ note that,
\begin{align}
  \mathcal{D}^\alpha \big(fg\big)(t) &= \lim_{\epsilon \rightarrow 0} \frac{f(te^{\epsilon t^{-\alpha}})g(te^{\epsilon t^{-\alpha}})-f(t)g(t)}{\epsilon} \nonumber  \\
	                                &= \lim_{\epsilon \rightarrow 0} \frac{f(te^{\epsilon t^{-\alpha}})g(te^{\epsilon t^{-\alpha}})-f(t)g(te^{\epsilon t^{-\alpha}})+f(t)g(te^{\epsilon t^{-\alpha}})-f(t)g(t)}{\epsilon} \nonumber  \\
																	&= \lim_{\epsilon \rightarrow 0} \Big[\frac{f(te^{\epsilon t^{-\alpha}})-f(t)}{\epsilon}\cdot g(te^{\epsilon t^{-\alpha}})\Big] + f(t)\lim_{\epsilon \rightarrow 0} \frac{g(te^{\epsilon t^{-\alpha}})-g(t)}{\epsilon} \nonumber \\
																	&= \mathcal{D}^\alpha \big(f\big)(t)\lim_{\epsilon \rightarrow 0} g(te^{\epsilon t^{-\alpha}}) +f(t)\mathcal{D}^\alpha \big(g\big)(t). \nonumber
\end{align}															
Since $g$ is continuous at $t$, $\lim_{\epsilon \rightarrow 0} g(te^{\epsilon t^{-\alpha}}) = g(t).$ This completes the proof of (4). The proof of (5) is similar and is left for the reader to verify. Next, we prove (6) in two different approaches. First, suppose $u=g(t)$ is $\alpha-$differentiable at a point $a>0$ and $y=f(u)$ is \textsl{differentiable} at a point $b=g(a)>0$. Let $\epsilon>0$, and $\Delta y = f(be^{\epsilon b^{-\alpha}})-f(b)$. Since $be^{\epsilon b^{-\alpha}} = b + \epsilon b^{1-\alpha}+O(\epsilon^2) = b+\Delta u$, where $\Delta u = \epsilon b^{1-\alpha}+O(\epsilon^2)$, we have
\begin{equation}
	\Delta y = D^1 f(b) \Delta u + \epsilon_1 \Delta u   \label{eq2}
\end{equation}	
where $\epsilon_1 \rightarrow 0$ as $\Delta u \rightarrow 0$. Thus, $\epsilon_1$ is a continuous function of $\Delta u$ if we define $\epsilon_1$ to be $0$ when $\Delta u =0$. Now, if $\Delta t$ is an increment in $t$ and $\Delta u $ and $\Delta y$ (with the possibility of both being equal to $0$) are corresponding increments in $u$ and $y$, respectively. Then we may write, using Equation (\ref{eq2}),
\begin{equation}
	\Delta u = \mathcal{D}^\alpha g(a) \Delta t + \epsilon_2 \Delta t   \label{eq3}
\end{equation}	 
where $\epsilon_2 \rightarrow 0$ as $\Delta t \rightarrow 0$, and
\begin{align}
	\Delta y &= \Big[D^1 f(b)+ \epsilon_1 \Big] \Delta u  \nonumber \\
	         &= \Big[D^1 f(b) + \epsilon_1 \Big] \cdot \Big[\mathcal{D}^\alpha g(a) + \epsilon_2 \Big] \Delta t   \nonumber 
\end{align}
where both $\epsilon_1 \rightarrow 0$ and $\epsilon_2 \rightarrow 0$ as $\Delta t \rightarrow 0$. Taking $\Delta t = \epsilon$, we now have,
\begin{align*}
        \mathcal{D}^\alpha (f \circ g)(t) &= \lim_{\epsilon \rightarrow 0} \frac{\Delta y}{\epsilon} \\
				                        &= \lim_{\Delta t \rightarrow 0} \Big[D^1 f(b) + \epsilon_1 \Big] \cdot \Big[\mathcal{D}^\alpha g(a) + \epsilon_2 \Big] \\
																&=D^1 f(b)\mathcal{D}^\alpha g(a) = f^\prime(g(a))\mathcal{D}^\alpha g(a).
\end{align*}
It can be seen that Equation~\ref{eq3}, can not be written in the form of Equation~\ref{eq2}. This makes it necessitates to use the $\alpha-$derivative in Equation~\ref{eq3}. 

Now, we prove the result following a standard limit-approach. To this end, in one hand, if the function $g$ is constant in a neighborhood containing $a$, then $\mathcal{D}^\alpha \big(f\circ g\big)(a) = 0$. On the other hand, assume that the function $g$ is non-constant in the neighborhood of $a$. In this case, we can find an $\epsilon_0 >0$ such that, $g(x_1) \neq g(x_2)$ for any $x_1, x_2 \in (a-\epsilon_0, a+\epsilon_0)$. Now, since $g$ is continuous at $a$, for $\epsilon$ sufficiently small, we have
\begin{align*}
   \mathcal{D}^\alpha \big(f\circ g\big)(a) &= \lim_{\epsilon \rightarrow 0} \frac{f\big(g(ae^{\epsilon a^{-\alpha}})\big)-f\big(g(t)\big)}{\epsilon} \\
	&=\lim_{\epsilon \rightarrow 0} \frac{f\big(g(ae^{\epsilon a^{-\alpha}})\big)-f\big(g(a)\big)}{g(ae^{\epsilon a^{-\alpha}})-g(a)}\cdot \frac{g(ae^{\epsilon a^{-\alpha}})-g(a)}{\epsilon} \\
	&=\lim_{\epsilon \rightarrow 0} \frac{f\big(g(a)+\epsilon_1\big)-f\big(g(a)\big)}{\epsilon_1}\cdot \frac{g(ae^{\epsilon a^{-\alpha}})-g(a)}{\epsilon}, \; \mbox{ where } \; \epsilon_1 \rightarrow 0 \mbox{ as } \epsilon \rightarrow 0, \\
	&=\lim_{\epsilon_1 \rightarrow 0} \frac{f\big(g(a)+\epsilon_1\big)-f\big(g(a)\big)}{\epsilon_1} \cdot \lim_{\epsilon \rightarrow 0} \frac{g(ae^{\epsilon a^{-\alpha}})-g(a)}{\epsilon} \\
	&=f^\prime\big(g(a)\big) \mathcal{D}^\alpha g(a), 
\end{align*}
for $a >0$. This establishes the Chain Rule. 

To prove part (7), we use a similar line of argument as in Equation (\ref{eq1}). Thus, taking $h=\epsilon t^{1-\alpha} \big(1+ O(\epsilon)\big)$, we have
\begin{align*}
    \mathcal{D}^\alpha (f)(t) &= \lim_{\epsilon \rightarrow 0} \frac{f(te^{\epsilon t^{-\alpha}})-f(t)}{\epsilon} \\
		                &= \lim_{\epsilon \rightarrow 0} \frac{f(t+\epsilon t^{1-\alpha}+O(\epsilon^2))-f(t)}{\epsilon} \\
										&= \lim_{\epsilon \rightarrow 0} \frac{f(t+h)-f(t)}{\frac{ht^{\alpha -1}}{1+O(\epsilon)}} \\
										&=t^{1-\alpha}\frac{df}{dt}(t),
\end{align*}
\noindent since, by the assumption, $f$ is differentiable at $t>0$. This completes the proof of the theorem. 
\end{proof}
\begin{remark} The result similar to (6) does not appear in \cite{Khalil}. The argument used in Equation (\ref{eq1}) was/will be utilized in several occasions in the paper. \end{remark}

The following theorem lists $\alpha-$fractional derivative of several familiar functions. The results are identical to that of conformable fractional derivative discussed in \cite{Khalil}, which will be shown to be a special case of $\alpha-$fractional derivative defined in (\ref{df}).  
\begin{theorem} \label{thm4} Let $a, n \in \mathbb{R}$ and $\alpha \in (0, 1]$. Then, we have the following results.
\begin{enumerate}
\item[(a)] $\mathcal{D}^\alpha (t^n) = n t^{n-\alpha}$.
\item[(b)] $\mathcal{D}^\alpha (1) = 0.$
\item[(c)] $\mathcal{D}^\alpha (e^{ax}) = ax^{1-\alpha}e^{ax}.$
\item[(d)] $\mathcal{D}^\alpha (\sin ax) = ax^{1-\alpha}\cos ax.$
\item[(e)] $\mathcal{D}^\alpha (\cos ax) = -ax^{1-\alpha}\sin ax.$
\item[(f)] $\mathcal{D}^\alpha (\frac{1}{\alpha}t^\alpha) = 1.$
\end{enumerate}
\end{theorem}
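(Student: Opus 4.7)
The plan is to reduce every item to results already established in Theorem~\ref{thm3}. Parts (a) and (b) are immediate restatements of parts (2) and (3) of Theorem~\ref{thm3}: in (a) we take the same $n \in \mathbb{R}$, and in (b) we view $1$ as the constant function. Part (f) then follows from (a) by linearity (Theorem~\ref{thm3}, part (1)): since $\mathcal{D}^\alpha(t^\alpha) = \alpha\, t^{\alpha - \alpha} = \alpha$, multiplying by the scalar $1/\alpha$ yields $\mathcal{D}^\alpha(\tfrac{1}{\alpha} t^\alpha) = 1$.

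For (c)--(e), I would invoke part (7) of Theorem~\ref{thm3}, which asserts that whenever $f$ is classically differentiable at $t>0$, one has $\mathcal{D}^\alpha(f)(t) = t^{1-\alpha} f'(t)$. All three of the functions $e^{ax}$, $\sin ax$ and $\cos ax$ are $C^\infty$ on $(0,\infty)$, so the hypothesis of part (7) is satisfied. It then remains only to record the classical derivatives: $\frac{d}{dx} e^{ax} = a e^{ax}$, $\frac{d}{dx} \sin ax = a\cos ax$, and $\frac{d}{dx} \cos ax = -a \sin ax$. Multiplying each by $x^{1-\alpha}$ produces exactly the formulas in (c), (d) and (e).

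There is essentially no obstacle here beyond bookkeeping: the genuine analytic content sits in Theorem~\ref{thm3}(7), and once that bridge between the $\alpha$-derivative and the classical derivative is in place, the identities (a)--(f) are a routine catalog. If one preferred a self-contained treatment without relying on (7), an alternative approach would be to expand $f(te^{\epsilon t^{-\alpha}})$ by writing $te^{\epsilon t^{-\alpha}} = t + \epsilon t^{1-\alpha} + O(\epsilon^2)$ and then Taylor-expanding $f$ around $t$, as was done in equation~(\ref{eq1}) for the power function; the leading order in $\epsilon$ reproduces $t^{1-\alpha} f'(t)$ in every case. Either route leads to the stated list with no further complications.
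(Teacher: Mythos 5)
Your proof is correct: parts (a), (b), (f) follow from Theorem~\ref{thm3}(2), (3) and (1), and parts (c)--(e) from Theorem~\ref{thm3}(7) together with the classical derivatives of $e^{ax}$, $\sin ax$, $\cos ax$. The paper in fact states Theorem~\ref{thm4} without any written proof, and your derivation supplies exactly the routine argument the author evidently intends, so there is nothing to add.
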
 
It is easy to see from part (6) of Theorem~\ref{thm3} that we have rather unusual results given in the next theorem. 
\begin{theorem} \label{thm5} Let $\alpha \in (0, 1]$ and $t >0$. Then, 
\begin{enumerate}
\item[(i)] $\mathcal{D}^\alpha (\sin \frac{1}{\alpha}t^\alpha ) = \cos \frac{1}{\alpha}t^\alpha.$
\item[(ii)] $\mathcal{D}^\alpha (\cos \frac{1}{\alpha}t^\alpha) = -\sin \frac{1}{\alpha}t^\alpha.$
\item[(iii)] $\mathcal{D}^\alpha (e^{\frac{1}{\alpha}t^\alpha}) = e^{\frac{1}{\alpha}t^\alpha}.$
\end{enumerate}
\end{theorem}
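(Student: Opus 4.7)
The plan is to reduce all three identities to the single observation that $\mathcal{D}^\alpha(\tfrac{1}{\alpha}t^\alpha) = 1$, which is already recorded as part (f) of Theorem~\ref{thm4} (and also follows immediately from part~(2) of Theorem~\ref{thm3} applied with $n=\alpha$, since $\mathcal{D}^\alpha(t^\alpha) = \alpha t^{\alpha-\alpha} = \alpha$). With this in hand, the natural tool is the chain rule (Theorem~\ref{thm3}, part~(6)), taking the inner function $g(t) = \tfrac{1}{\alpha} t^\alpha$ and the outer function $f$ equal to $\sin$, $\cos$, or $\exp$ respectively. Each of these outer functions is differentiable everywhere, so the hypothesis of part~(6) is satisfied.

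For (i) I would set $f(u)=\sin u$ and apply the chain rule to get
\[
\mathcal{D}^\alpha\!\bigl(\sin\tfrac{1}{\alpha}t^\alpha\bigr) = f'\bigl(g(t)\bigr)\,\mathcal{D}^\alpha g(t) = \cos\!\bigl(\tfrac{1}{\alpha}t^\alpha\bigr)\cdot 1 = \cos\tfrac{1}{\alpha}t^\alpha.
\]
Part (ii) is identical with $f(u)=\cos u$, producing the factor $-\sin(\tfrac{1}{\alpha}t^\alpha)$, and part (iii) uses $f(u)=e^u$, where $f'=f$ gives back the same exponential.

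As a sanity check (and alternative proof) I would also verify each identity using part~(7) of Theorem~\ref{thm3}: since each composite is an ordinary differentiable function of $t$, one has $\mathcal{D}^\alpha(F)(t) = t^{1-\alpha}F'(t)$, and a direct computation of $F'(t)$ by the classical chain rule yields a factor $t^{\alpha-1}$ coming from $\tfrac{d}{dt}\bigl(\tfrac{1}{\alpha}t^\alpha\bigr)=t^{\alpha-1}$, which cancels $t^{1-\alpha}$ and leaves the stated result. This two-sided verification also illustrates the internal consistency of parts (6) and (7) of Theorem~\ref{thm3}.

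There is no substantive obstacle here; the statement is essentially a corollary of the chain rule together with the normalization $\mathcal{D}^\alpha(\tfrac{1}{\alpha}t^\alpha)=1$. The only thing worth flagging is that the argument $\tfrac{1}{\alpha}t^\alpha$ plays, in this calculus, precisely the role that $t$ plays in the classical calculus — it is the $\alpha$-analogue of the identity function — which is why the three formulas match their classical counterparts exactly and motivates why this is the "right" rescaling of the argument in (i)--(iii).
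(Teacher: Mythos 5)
Your proposal is correct and matches the paper's own (implicit) argument: the paper introduces Theorem~\ref{thm5} with the remark that it follows from part~(6) of Theorem~\ref{thm3}, i.e.\ the chain rule applied with inner function $g(t)=\tfrac{1}{\alpha}t^\alpha$ and $\mathcal{D}^\alpha g = 1$, exactly as you do. Your additional cross-check via part~(7) is a nice consistency verification but not needed.
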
 

Theorem \ref{thm5} suggests that there is a pseudo-invariant space corresponding to the $\alpha-$fractional derivative on which the $sine$, $cosine$ and the exponential function, $e^x$ behave as they are having familiar classical derivatives. Due to the Fourier theory, this in turn, implies that any function which possesses a Fourier expansion behaves as having classical derivatives. This would be an interesting topic for further research. 

%    Figure insertion; default placement is top; if the figure occupies
%    more than 75% of a page, the [p] option should be specified.
\begin{figure}[h]
 \centering
   \subfloat[$\nu$= 2.0]{\includegraphics[height = 6.0cm,width=6.3cm, trim = 0 20 0 0]{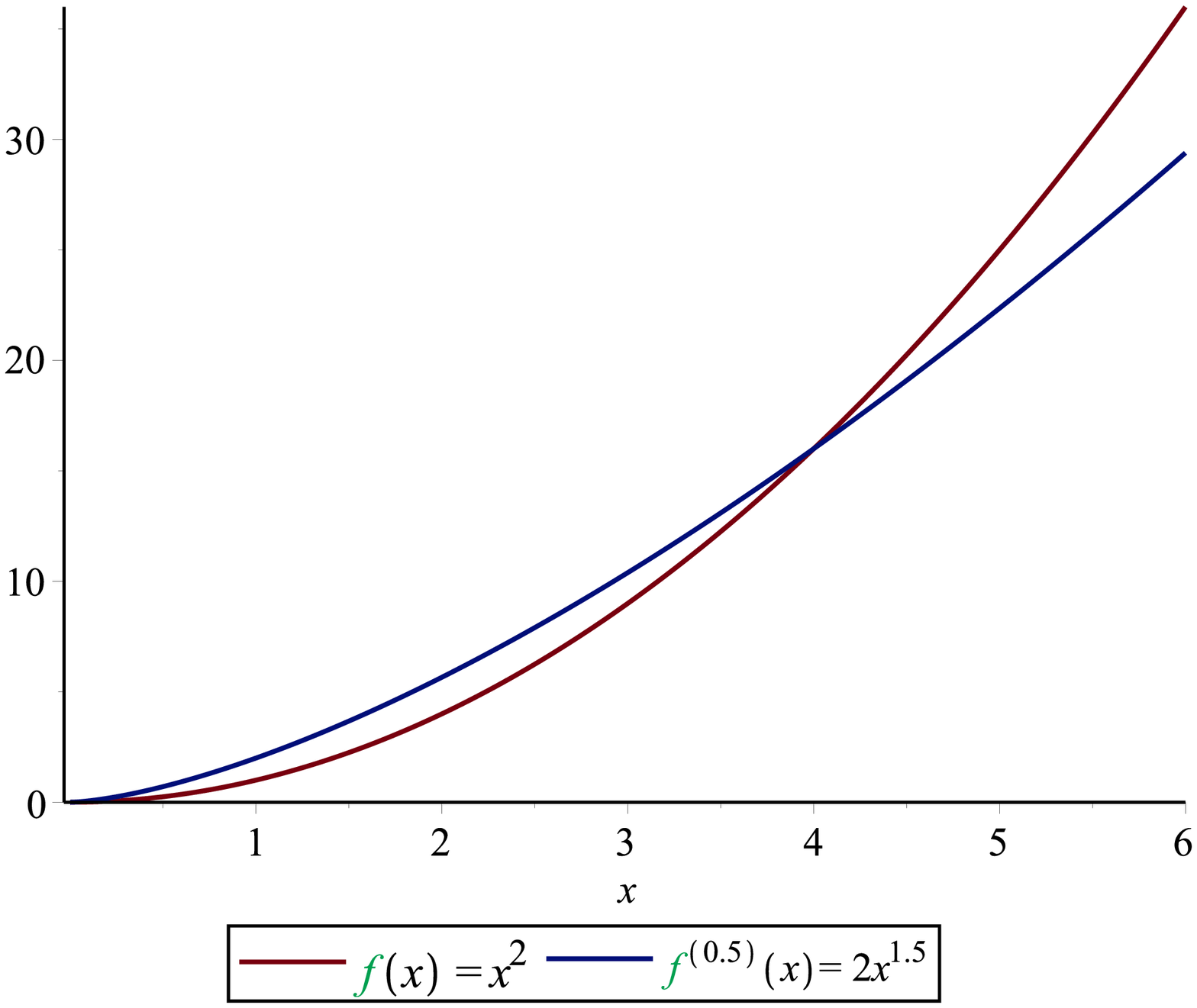}}
	%\hspace{0.01in}
	\subfloat[$\nu$= 1.0]{\includegraphics[height=5.7cm,width=6cm, trim=0 0 0 25]{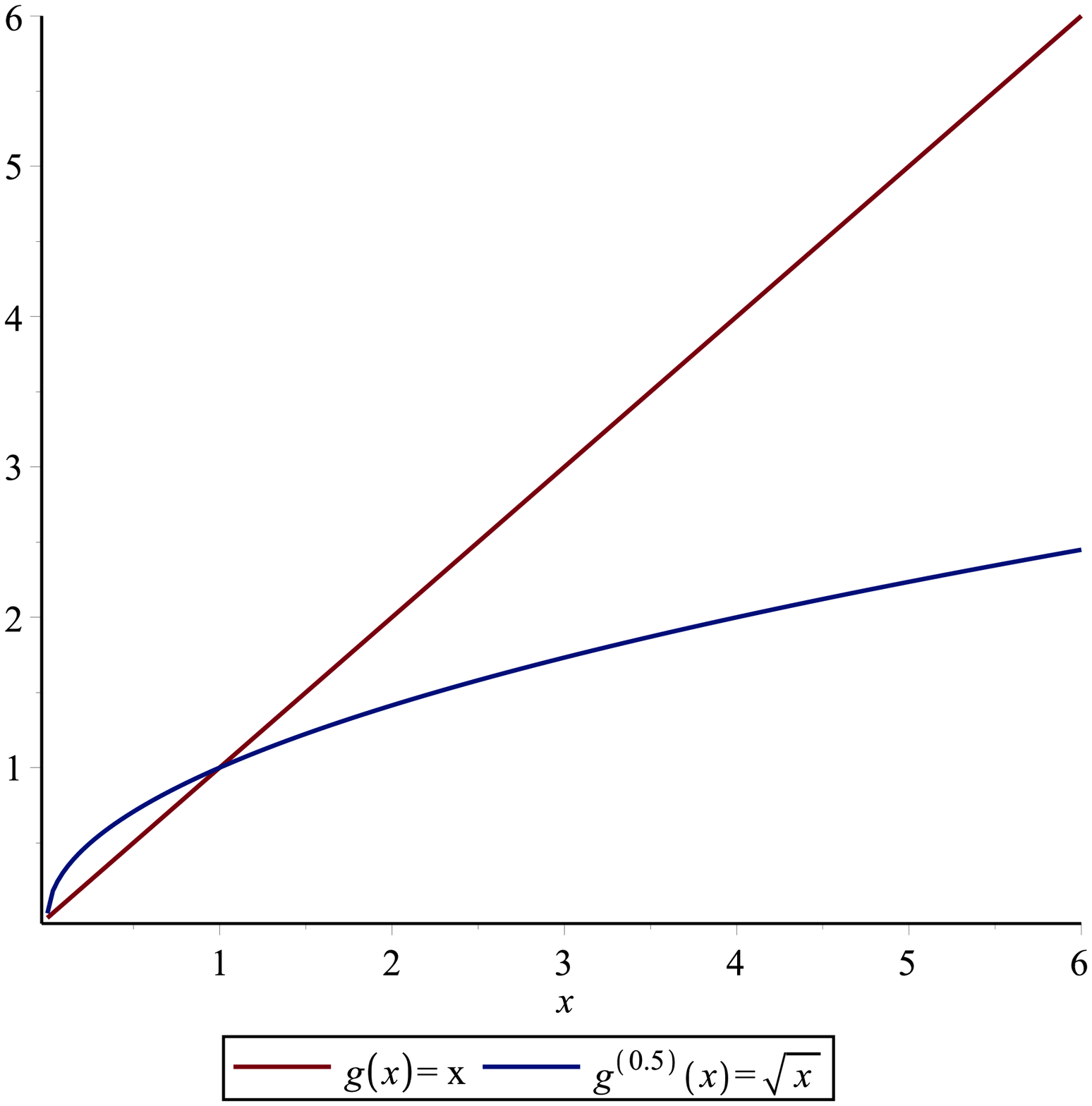}}
\caption{$\frac{1}{2}-$fractional derivatives of power function $f(x)=x^\nu$.}
\label{fig:deri}
\end{figure}

%\begin{figure}[h]
%	\centering
%	  \subfloat[$\nu$= 2.0]{\includegraphics[width=2.3in, height=2.2in,  clip=true, trim = 0 0 0 0]{5.eps}}
	  %%\hspace{0.01in}
%	  \subfloat[$\alpha$= 0.5]{\includegraphics[width=2.3in, height=2.2in,clip=true, trim = 0 0 0 0]{6.eps}}  
%	\caption{Generalized fractional derivative of the power function $f(x)=x^\nu$ for $\alpha = 0.1,\, 0.5,\, 0.9$ and $\nu = 0.5, \, 1.0, \, 1.5$}        
%	\label{fig:FD-2}
%\end{figure}

In \cite{udita1,udita2}, the author introduced fractional integrals and derivatives, which generalize the Riemann-Liouville and the Hadamard fractional integrals and derivatives to a single form. In the next subsection, we show that the results obtained by Khalil et at. \cite{Khalil} are special cases of the more general extension discussed in this paper. 

\subsection{Further Generalizations}
Let us first define a truncated exponential function given by,
%\begin{definition}%[Truncated Exponential Function] \mbox{} \\
\[
	e_k^x = \sum_{i=0}^k \frac{x^i}{i!} \label{e-k}
\]
%\end{definition}
With the help of this definition, we will define another fractional derivative given below.

\begin{definition}
Let $f:[0, \infty) \rightarrow \mathbb{R}$ and $t > 0$. Then the \emph{fractional derivative} of $f$ of order $\alpha$ is defined by,
\begin{equation}
    \mathcal{D}_k^\alpha (f)(t) = \lim_{\epsilon \rightarrow 0} \frac{f(te_k^{\epsilon t^{-\alpha}}) - f(t)}{\epsilon},
		\label{def2}
\end{equation}
for $t >0, \; \alpha \in (0, 1)$. If $f$ is $\alpha-$differentiable in some $(0, a), \; a >0$, and $\lim_{t \rightarrow 0^+} \mathcal{D}_k^\alpha (f)(t)$ exists, then define
\[
\mathcal{D}_k^\alpha (f)(0) = \lim_{\epsilon \rightarrow 0} \mathcal{D}_k^\alpha (f)(t). 
\]
\end{definition}
Now, It is easy to see that 
\begin{equation}
   \mathcal{D}_1^\alpha (f)(t) = \lim_{\epsilon \rightarrow 0} \frac{f(t + \epsilon t^{1-\alpha}) - f(t)}{\epsilon}, \label{ndef1}
\end{equation}
and
\begin{equation}
   \mathcal{D}_\infty^\alpha (f)(t) = \lim_{\epsilon \rightarrow 0} \frac{f(te^{\epsilon t^{-\alpha}}) - f(t)}{\epsilon}, \label{ndef2}
\end{equation}
where the expression on the right-hand-side of (\ref{ndef1}) is the conformable fractional derivative defined in (\ref{def1}) in \cite{Khalil}, while the one on (\ref{ndef2}) is the $\alpha-$fractional derivative defined in this paper. Furthermore, if $\alpha = 1$ the Equation (\ref{ndef1}) becomes the classical definition of the first derivative of a function $f$ at a point $t$. While, noting that $te^{\epsilon t^{-\alpha}} = t+ \epsilon +O(\epsilon^2)$, we may also show that the Equation (\ref{ndef2}) is equivalent to the classical definition of the first derivative of a function $f$. These observations further suggest that there are corresponding results similar to the Rolle's theorem and the Mean Value theorem. Next, we give those two results.
\begin{theorem}[Rolle's theorem for $\alpha-$Fractional Differentiable Functions]
Let $a>0$ and $f:[a, b] \rightarrow \mathbb{R}$ be a function with the properties that
\begin{enumerate}
\item[(1)] $f$ is continuous on $[a, b]$,
\item[(2)] $f$ is $\alpha-$differentiable on $(a, b)$ for some $\alpha \in (0, 1)$,
\item[(3)] $f(a)=f(b)$.
\end{enumerate}
Then, there exists $c\in(a,b)$, such that $\mathcal{D}^\alpha (f)(c) = 0.$
\end{theorem}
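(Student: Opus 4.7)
The plan is to mimic the classical proof of Rolle's theorem, replacing the ordinary difference quotient with the $\alpha$-fractional one and carefully tracking signs under the transformation $t \mapsto te^{\epsilon t^{-\alpha}}$.

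First, I would split into two cases. If $f$ is constant on $[a,b]$, then by part (3) of Theorem~\ref{thm3}, $\mathcal{D}^\alpha(f)(c) = 0$ for every $c \in (a,b)$, and we are done. Otherwise, since $f$ is continuous on the compact interval $[a,b]$, it attains its maximum $M$ and minimum $m$. Because $f(a)=f(b)$ and $f$ is non-constant, at least one of $M,m$ must be strictly different from $f(a)=f(b)$, so at least one of them is attained at some interior point $c \in (a,b)$.

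The main step is to show $\mathcal{D}^\alpha(f)(c)=0$ at such an interior extremum. Suppose $c$ is a local maximum (the minimum case is symmetric). Since $c>0$ and the map $\epsilon \mapsto ce^{\epsilon c^{-\alpha}}$ is smooth with derivative $c^{1-\alpha}>0$ at $\epsilon=0$, for all sufficiently small $|\epsilon|$ the point $ce^{\epsilon c^{-\alpha}}$ lies inside $(a,b)$, and moreover the sign of $ce^{\epsilon c^{-\alpha}}-c$ agrees with the sign of $\epsilon$. Because $c$ is a local maximum, $f(ce^{\epsilon c^{-\alpha}})-f(c)\le 0$ for all such $\epsilon$. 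Dividing by $\epsilon$ and taking one-sided limits,
\begin{align*}
\lim_{\epsilon \to 0^+} \frac{f(ce^{\epsilon c^{-\alpha}})-f(c)}{\epsilon} &\le 0, \\
\lim_{\epsilon \to 0^-} \frac{f(ce^{\epsilon c^{-\alpha}})-f(c)}{\epsilon} &\ge 0.
\end{align*}
Since $f$ is assumed $\alpha$-differentiable at $c$, both one-sided limits exist and coincide with $\mathcal{D}^\alpha(f)(c)$, forcing $\mathcal{D}^\alpha(f)(c)=0$.

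The only subtle point, which I would highlight carefully, is the justification that the transformation $\epsilon \mapsto ce^{\epsilon c^{-\alpha}}$ is orientation-preserving near $\epsilon=0$; without this, the one-sided sign argument could fail. This follows from $c>0$ and strict positivity of the derivative $c^{1-\alpha}$ at $\epsilon=0$, together with continuity, which lets us pick a neighborhood of $0$ on which the map is strictly increasing and stays inside $(a,b)$. Everything else is a routine transcription of the classical Rolle proof.
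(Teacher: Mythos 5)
Your proof is correct and follows essentially the same route as the paper's: locate an interior extremum and observe that the two one-sided limits of the $\alpha$-difference quotient have opposite (weak) signs, forcing $\mathcal{D}^\alpha(f)(c)=0$. You supply details the paper leaves implicit --- the constant case and the verification that $\epsilon \mapsto ce^{\epsilon c^{-\alpha}}$ is orientation-preserving near $\epsilon=0$, which is exactly the point on which the sign argument hinges --- but the underlying argument is the same.
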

\begin{proof}
We prove this using contradiction. Since $f$ is continuous on $[a, b]$ and $f(a)=f(b)$, there is $c\in(a, b)$, at which the function has a local extrema. %Without loss of generality, we assume there is a local minimum at $c$. 
Then, 
\[
  \mathcal{D}^\alpha f(c)=\lim_{\epsilon \rightarrow 0^-} \frac{f(ce^{\epsilon c^{-\alpha}}) - f(c)}{\epsilon}=\lim_{\epsilon \rightarrow 0^+} \frac{f(ce^{\epsilon c^{-\alpha}}) - f(c)}{\epsilon}.
\]
But, the two limits have opposite signs. Hence, $\mathcal{D}^\alpha f(c) =0.$ 
\end{proof}

\begin{theorem}[Mean Value Theorem for $\alpha-$Fractional Differentiable Functions]
Let $a>0$ and $f:[a, b] \rightarrow \mathbb{R}$ be a function with the properties that
\begin{enumerate}
\item[(1)] $f$ is continuous on $[a, b]$,
\item[(2)] $f$ is $\alpha-$differentiable on $(a, b)$ for some $\alpha \in (0, 1)$,
\end{enumerate}
Then, there exists $c\in(a,b)$, such that $\mathcal{D}^\alpha (f)(c) = \frac{f(b)-f(a)}{\frac{1}{\alpha}b^\alpha-\frac{1}{\alpha}a^\alpha}.$
\end{theorem}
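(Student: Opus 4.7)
The plan is to mimic the classical proof of the Mean Value Theorem by constructing an auxiliary function $g$ whose $\alpha$-derivative vanishes precisely where we want the desired formula to hold, and then invoking the Rolle-type theorem proved immediately above. The guiding principle is that in this fractional setting the natural ``linear'' object is not $t$ itself but $\tfrac{1}{\alpha}t^\alpha$, because by Theorem~\ref{thm4}(f) one has $\mathcal{D}^\alpha\!\bigl(\tfrac{1}{\alpha}t^\alpha\bigr) = 1$; this is the analogue of $\tfrac{d}{dt}(t)=1$ that makes the construction work.

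Concretely, I would define on $[a,b]$ the auxiliary function
\[
  g(t) \;=\; f(t) - f(a) - \frac{f(b)-f(a)}{\frac{1}{\alpha}b^\alpha - \frac{1}{\alpha}a^\alpha}\left(\tfrac{1}{\alpha}t^\alpha - \tfrac{1}{\alpha}a^\alpha\right).
\]
A quick substitution shows $g(a)=0$ and $g(b)=0$, so condition (3) of the Rolle-type theorem is satisfied. Continuity of $g$ on $[a,b]$ follows from the continuity of $f$ and of $t^\alpha$, and $\alpha$-differentiability of $g$ on $(a,b)$ follows from the $\alpha$-differentiability of $f$ combined with the linearity of $\mathcal{D}^\alpha$ (Theorem~\ref{thm3}(1)) and the explicit formula $\mathcal{D}^\alpha(t^\alpha)=\alpha t^{\alpha-\alpha}=\alpha$ from Theorem~\ref{thm3}(2) (equivalently Theorem~\ref{thm4}(f)).

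Hence the Rolle-type theorem yields some $c\in(a,b)$ with $\mathcal{D}^\alpha g(c)=0$. Computing $\mathcal{D}^\alpha g$ using linearity and Theorem~\ref{thm4}(f) gives
\[
  \mathcal{D}^\alpha g(c) \;=\; \mathcal{D}^\alpha f(c) - \frac{f(b)-f(a)}{\frac{1}{\alpha}b^\alpha - \frac{1}{\alpha}a^\alpha}\cdot 1,
\]
so setting this equal to zero rearranges directly to the claimed identity.

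I do not foresee any real obstacle: the only nontrivial ingredient is picking the right comparison term, and once one recognizes that $\tfrac{1}{\alpha}t^\alpha$ plays the role of the identity function for $\mathcal{D}^\alpha$, the rest is routine linearity plus one invocation of the previous theorem. The mild subtlety worth noting is that we need $a>0$ so that $\tfrac{1}{\alpha}a^\alpha$ is well-defined and the denominator $\tfrac{1}{\alpha}b^\alpha - \tfrac{1}{\alpha}a^\alpha$ is nonzero, and this is exactly the hypothesis stated.
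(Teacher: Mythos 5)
Your proposal is correct and follows essentially the same route as the paper: the identical auxiliary function $g(t) = f(t) - f(a) - \frac{f(b)-f(a)}{\frac{1}{\alpha}b^\alpha - \frac{1}{\alpha}a^\alpha}\bigl(\tfrac{1}{\alpha}t^\alpha - \tfrac{1}{\alpha}a^\alpha\bigr)$, an application of the fractional Rolle's theorem, and the fact that $\mathcal{D}^\alpha\bigl(\tfrac{1}{\alpha}t^\alpha\bigr)=1$. Your write-up is in fact somewhat more detailed than the paper's, which leaves the verification of Rolle's hypotheses and the final rearrangement implicit.
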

\begin{proof}
Consider the function,
\[
   g(x) = f(x) -f(a) - \frac{f(b)-f(a)}{\frac{1}{\alpha}b^\alpha-\frac{1}{\alpha}a^\alpha}\Big(\frac{1}{\alpha}x^\alpha-\frac{1}{\alpha}a^\alpha\Big).
\]
Then, the function $g$ satisfies the conditions of the fractional Rolle's theorem. Hence, there exists $c\in(a,b)$, such that $\mathcal{D}^\alpha (g)(c) =0$. Using the fact that $\mathcal{D}^\alpha (\frac{1}{\alpha}x^\alpha) =1$, the result follows. 
\end{proof}
As a result of the fractional Mean Value Theorem, we also have the following proposition. 

\begin{proposition}
Let $f:[a, b] \rightarrow \mathbb{R}$ be $\alpha-$differentiable for some $\alpha \in (0, 1)$. Suppose that $\mathcal{D}^\alpha f $ is bounded on $[a, b]$. Then, if 
\begin{enumerate}
	\item[(a)] $a>0$, or
	\item[(b)] $\mathcal{D}^\alpha f $ is continuous at either $a$ or $b$, 
\end{enumerate}
the function $f$ is uniformly continuous on $[a, b]$, and hence $f$ is bounded. 
\end{proposition}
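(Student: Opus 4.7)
The plan is to leverage the fractional Mean Value Theorem just established, together with the continuity of the map $\phi(x)=\tfrac{1}{\alpha}x^{\alpha}$, to obtain a quantitative modulus of continuity for $f$. Let $M>0$ be a bound for $|\mathcal{D}^{\alpha}f|$ on $[a,b]$. For any $x_1,x_2$ with $0<x_1<x_2\le b$, the Mean Value Theorem produces $c\in(x_1,x_2)$ such that
\[
   |f(x_2)-f(x_1)| = |\mathcal{D}^{\alpha}f(c)|\cdot\Big|\tfrac{1}{\alpha}x_2^{\alpha}-\tfrac{1}{\alpha}x_1^{\alpha}\Big| \le \tfrac{M}{\alpha}\,|x_2^{\alpha}-x_1^{\alpha}|.
\]
This is the single estimate that will drive both cases.

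In case (a), i.e.\ when $a>0$, the map $\phi$ is continuously differentiable on the compact interval $[a,b]$, with $\phi'(x)=x^{\alpha-1}$ bounded above by $a^{\alpha-1}$ (since $\alpha-1<0$). Hence the estimate above upgrades at once to the Lipschitz bound $|f(x_2)-f(x_1)|\le M a^{\alpha-1}|x_2-x_1|$, from which uniform continuity on $[a,b]$ is immediate, and boundedness of $f$ follows from uniform continuity on a compact interval.

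For case (b), the situation of interest is $a=0$ (otherwise (a) already applies); assume $\mathcal{D}^{\alpha}f$ is continuous at $0$ (the case of continuity at $b$ being vacuous for this endpoint issue, or handled symmetrically after the fact). The MVT bound holds for every pair $0<x_1<x_2\le b$, but one must extend it to $x_1=0$. To this end I invoke Theorem~2.2: since $f$ is $\alpha$-differentiable at $0$, $f$ is continuous at $0$. Letting $x_1\to 0^{+}$ in the inequality and using continuity of $f$ at $0$ and of $\phi$ on $[0,b]$ yields
\[
   |f(x_2)-f(0)| \le \tfrac{M}{\alpha}\,x_2^{\alpha}, \qquad 0<x_2\le b,
\]
and more generally the bound $|f(x_2)-f(x_1)|\le \tfrac{M}{\alpha}|x_2^{\alpha}-x_1^{\alpha}|$ for all $0\le x_1<x_2\le b$. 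The function $\phi$ is continuous on the compact interval $[0,b]$, hence uniformly continuous there, and uniform continuity of $f$ on $[0,b]$ now follows by composing with this modulus. Boundedness of $f$ is an immediate consequence.

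The one real subtlety, and therefore the main obstacle, is the extension of the MVT estimate down to the endpoint $0$, where the Mean Value Theorem itself is inapplicable (its hypothesis requires $a>0$). The remedy is the limiting argument above, which is exactly why the continuity of $\mathcal{D}^{\alpha}f$ at the endpoint is assumed in (b): it ensures through the definition and Theorem~2.2 that $f$ is continuous at that endpoint, which is precisely what the limit $x_1\to 0^{+}$ requires. Everything else reduces to standard facts about $x^{\alpha}$ on a compact interval.
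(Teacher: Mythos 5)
The paper states this proposition with no proof at all (it is merely announced as a consequence of the fractional Mean Value Theorem), so there is nothing of the author's to compare your argument against; I can only assess it on its own terms. Your case (a) is correct and complete: the estimate $|f(x_2)-f(x_1)|\le \tfrac{M}{\alpha}\,|x_2^{\alpha}-x_1^{\alpha}|$ is a legitimate application of the fractional MVT for $0<x_1<x_2\le b$ (continuity of $f$ on $[x_1,x_2]$ being supplied by Theorem 2.2, since every point of that interval is positive), and for $a>0$ the map $x\mapsto \tfrac{1}{\alpha}x^{\alpha}$ is Lipschitz on $[a,b]$ with constant $a^{\alpha-1}$, so $f$ is Lipschitz, hence uniformly continuous and bounded.

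Case (b), however, contains a genuine gap. The only situation in which (b) adds anything is $a=0$, and there your argument needs $f$ to be continuous at $0$ in order to pass to the limit $x_1\to 0^{+}$. You justify this by citing Theorem 2.2, but that theorem is stated, and is only provable, for points $t>0$: its proof uses the difference quotient at $t$ itself, whereas $\alpha$-differentiability at $0$ is defined in this paper as the mere existence of $\lim_{t\to 0^{+}}\mathcal{D}^{\alpha}f(t)$, a condition that involves the values of $f$ only on $(0,b]$ and says nothing about the value $f(0)$. Concretely, redefining $f$ at the single point $0$ changes neither $\mathcal{D}^{\alpha}f$ on $(0,b]$ nor $\mathcal{D}^{\alpha}f(0)$, nor the boundedness or continuity of $\mathcal{D}^{\alpha}f$, yet it destroys continuity of $f$ at $0$; so continuity of $f$ at $0$ cannot be extracted from the stated hypotheses by the route you indicate, and your closing sentence (``the continuity of $\mathcal{D}^{\alpha}f$ at the endpoint ensures through the definition and Theorem 2.2 that $f$ is continuous there'') is an assertion rather than an argument. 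What your estimate does prove unconditionally is that $f$ is uniformly continuous on $(0,b]$, hence bounded there and continuously extendable to $0$; to get uniform continuity on all of $[0,b]$ you must either assume $f(0)$ agrees with that limiting value or find a genuinely different way to use hypothesis (b) --- note also that the alternative in (b), continuity of $\mathcal{D}^{\alpha}f$ at $b$, gives no purchase whatsoever on the endpoint $0$, so your parenthetical dismissal of it as ``handled symmetrically'' does not work either.
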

%The proof is similar to the proof of the classical result. 

Figure 1 depicts the graphs of the $\alpha-$fractional derivatives of the power function $f(x)=x^\nu$ for $\nu = 1$ and $2$. It is also interesting to note that the function $f(x) = \sqrt{x}$ has a constant derivative equal to $\frac{1}{2}$, and not equal to $\sqrt{\pi}/2$ as in the case of the Riemann-Liouville derivative. This also suggests that the new derivative should have a rather different interpretation in the geometrical sense, which has yet to be discovered. 

Further, it can be seen that a function could be $\alpha-$differential at a point but not differentiable in the classical sense. (cf. page 67 of \cite{Khalil}). For example, the function $f(t) = 3t^{\frac{1}{3}}$ is everywhere $\frac{1}{3}-$differentiable and has the derivative of $1$ while it is not differentiable at $t=0$. The similar results are there for the Riemann-Liouville and the Caputo derivatives. 

Next, we consider the possibility of $\alpha \in (n, n+1]$, for some $n\in \mathbb{N}$. We have the following definition.
\begin{definition} \label{def5}
Let $\alpha \in (n, n+1]$, for some $n\in \mathbb{N}$ and $f$ be an $n-$differentiable at $t >0$. Then the $\alpha-$fractional derivative of $f$ is defined by
\[
   \mathcal{D}^\alpha f (t) = \lim_{\epsilon \rightarrow 0} \frac{f^{(n)}(te^{\epsilon t^{n-\alpha}})-f^{(n)}(t)}{\epsilon},
\]
if the limit exists. 
\end{definition}
As a direct consequence of Definition \ref{def5} and part (7) of theorem \ref{thm3}, we can show that $\mathcal{D}^\alpha f (t) = t^{n+1-\alpha}f^{(n+1)}(t)$, where $\alpha \in (n, n+1]$ and $f$ is $(n+1)-$differentiable at $t>0$.

Now, the question whether the fractional derivative $\mathcal{D}^\alpha f$ has a corresponding $\alpha-$fractional integral will be answered next. For simplicity, we shall only consider the class of continuous functions here. The results can easily be extended to more general settings and will be discussed in forthcoming papers. 

\section{Fractional Integral}
As in the works of \cite{Khalil}, it is interesting to note that, in spite of the variation of the definitions of the fractional derivatives, we can still adopt the same definition of the fractional integral here due to the fact that we obtained similar results in Theorem \ref{thm4} as of the results (1)--(6) and (i)--(iii) in \cite{Khalil}. So, we have the following definition.
\begin{definition}[Fractional Integral] 
Let $a\geq 0$ and $t \geq a$. Also, let $f$ be a function defined on $(a, t]$ and $\alpha \in \mathbb{R}$. Then, the \emph{$\alpha-$fractional integral} of $f$ is defined by,
\begin{equation}
   \mathcal{I}^\alpha_a (f)(t) = \int_a^t \frac{f(x)}{x^{1-\alpha}} \, dx
	\label{int1}
\end{equation}
if the Riemann improper integral exists. 
\end{definition}

It is interesting to observe that the $\alpha-$fractional derivative and the $\alpha-$fractional integral are inverse of each other as given in the next result. 
\begin{theorem}[Inverse property]
Let $a \geq 0$,  and $\alpha \in (0, 1)$. Also, let $f$ be a continuous function such that $\mathcal{I}^\alpha_a f$ exists. Then
\[
   \mathcal{D}^\alpha\Big(\mathcal{I}^\alpha_a (f)\Big) (t) = f(t), \quad \quad \mbox{for} \;\; t \geq a. 
\]
\end{theorem}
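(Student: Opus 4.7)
The plan is to prove the inverse property by combining the Fundamental Theorem of Calculus from classical analysis with part (7) of Theorem \ref{thm3}, which expresses the $\alpha$-derivative of any (classically) differentiable function as a weighted ordinary derivative.

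First I would set $g(t) = \mathcal{I}^\alpha_a(f)(t) = \int_a^t f(x)x^{\alpha-1}\,dx$ and argue that $g$ is classically differentiable on $\{t > 0 : t \geq a\}$. Since $f$ is continuous by hypothesis and $x \mapsto x^{\alpha-1}$ is continuous away from $0$, the integrand $x \mapsto f(x)/x^{1-\alpha}$ is continuous on the relevant interval, so by the Fundamental Theorem of Calculus $g$ is differentiable with
\[
\frac{dg}{dt}(t) = \frac{f(t)}{t^{1-\alpha}}.
\]

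Next I would invoke part (7) of Theorem \ref{thm3}: whenever a function is differentiable at $t>0$, its $\alpha$-derivative is given by $\mathcal{D}^\alpha(g)(t) = t^{1-\alpha}\,\frac{dg}{dt}(t)$. Applying this to our $g$ yields
\[
\mathcal{D}^\alpha\!\bigl(\mathcal{I}^\alpha_a(f)\bigr)(t) \;=\; t^{1-\alpha}\cdot \frac{f(t)}{t^{1-\alpha}} \;=\; f(t),
\]
for every $t > \max(a,0)$.

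The only mildly delicate point, and the one I would flag as the main obstacle, is the boundary behaviour. If $a=0$, the improper integral defining $g(t)$ requires an integrability argument near $x=0$; continuity and local boundedness of $f$ near $0$ together with $\int_0^1 x^{\alpha-1}\,dx < \infty$ handles this. Likewise, the evaluation at $t=a$ (where $g(a)=0$) must be interpreted through the one-sided extension $\mathcal{D}^\alpha(g)(0) = \lim_{t\to 0^+}\mathcal{D}^\alpha(g)(t)$ supplied in the definition; under the continuity hypothesis on $f$, this limit equals $f(0)$, so the identity extends to the whole range $t \geq a$. Apart from this boundary bookkeeping, the argument is a direct one-line consequence of Theorem \ref{thm3}(7) and the classical FTC.
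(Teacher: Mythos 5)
Your proof is correct and follows essentially the same route as the paper: apply the Fundamental Theorem of Calculus to differentiate $\mathcal{I}^\alpha_a(f)$ classically, then invoke part (7) of Theorem \ref{thm3} to convert to the $\alpha$-derivative. In fact your version is slightly cleaner, since you correctly write the prefactor as $t^{1-\alpha}$ (the paper's displayed computation has a typo, $t^{\alpha-1}$, which only cancels against $f(t)/t^{1-\alpha}$ because the intended exponent is $1-\alpha$), and your remarks on the improper integral near $x=0$ and the one-sided evaluation at $t=a$ address boundary issues the paper passes over silently.
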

\begin{proof} The proof is a direct consequence of the fundamental theorem of calculus. Since $f$ is continuous, $\mathcal{I}^\alpha_a f$ is clearly differentiable. Therefore, using part (7) of theorem \ref{thm3}, we have
\begin{align*}
   \mathcal{D}^\alpha\Big(\mathcal{I}^\alpha_a (f)\Big) (t) &= t^{\alpha -1}\frac{d}{dt} \mathcal{I}^\alpha_a (f) (t), \\
	                                                          &= t^{\alpha -1}\frac{d}{dt}\int_a^t \frac{f(x)}{x^{1-\alpha}} \, dx, \\
																														&= t^{\alpha -1}\frac{f(t)}{t^{1-\alpha}}, \\
  																													&= f(t).																							
\end{align*}
This completes the proof. 
\vspace{-.15cm}
\end{proof}		

The applications of this $\alpha-$integral and derivative will be discussed in a forthcoming paper. Several interesting applications of a similar fractional derivative defined in (\ref{def1}) and the corresponding fractional integral appear in \cite{Khalil}, the major reference of this paper. 

We conclude the paper with the following questions, which have yet to be answered.

%\begin{question}
%Can a function, which is not differentiable in the classical sense, be $\alpha-$differentiable for some $\alpha \in \mathbb{R}$?
%\end{question}

%Investigate the $\frac{1}{2}$ derivative of $f(x) = \|x-1\|$ near $x=1$. 

%We would like to repeate some of the remarks made by the authors of \cite{Khalil} here:
\begin{enumerate}
  \item[(i)] What is the physical meaning or geometric interpretation of the $\alpha-$derivative?
	\item[(ii)]Is there a similarity between the $\alpha-$derivative and the \emph{G\^{a}teaux derivative}?
	%\item[(iii)]...Easy to solve some of the differential equations
	\item[(iii)]One of the limitations of this version of the fractional derivative is that it assumes that the variable $t >0$. So the question is whether we can relax this condition on a special class of functions, if so, what is it?
\end{enumerate}

%    Mathematical displays; for additional information, see the amsmath
%    user's guide, linked from http://www.ams.org/tex/amslatex.html .

%-----------------------------------------------------------------------
% End of jams-l-template.tex
%-----------------------------------------------------------------------

%    Text of article.

%    Bibliographies can be prepared with BibTeX using amsplain,
%    amsalpha, or (for "historical" overviews) natbib style.

%\bibliographystyle{amsplain}
%\begin{thebibliography}{10}

%\bibitem {A} T. Aoki, \textit{Calcul exponentiel des op\'erateurs
%microdifferentiels d'ordre infini.} I, Ann. Inst. Fourier (Grenoble)
%\textbf{33} (1983), 227--250.

%\bibitem {Khalil} R. Brown, \textit{On a conjecture of Dirichlet},
%Amer. Math. Soc., Providence, RI, 1993.

%\bibitem {D} R. A. DeVore, \textit{Approximation of functions},
%Proc. Sympos. Appl. Math., vol. 36,
%Amer. Math. Soc., Providence, RI, 1986, pp. 34--56.

%\end{thebibliography}

%\end{document}

\bibliographystyle{abbrv}

\begin{thebibliography}{10}

%\bibitem{Abel} Abel, N.H., \emph{Solution de quelques probl\`{e}mes \`{a} l'aide d'int\'{e}grales d\'{e}finies}, Mag. Naturv., Vol. 1, no. 2, pp. 1--27, 1823.

\bibitem{udita1} U.N. Katugampola, \emph{New approach to a generalized fractional integral}, Appl. Math. Comput., 218(3)(2011) 860--865.

\bibitem{udita2} U.N. Katugampola, \emph{New approach to generalized fractional derivatives}, B. Math. Anal. App., 6(4)(2014) 1--15. arXiv:1106.0965. 

\bibitem{Khalil} R. Khalil, M. A. Horani, A. Yousef, M. Sababheh. \emph{A new definition of fractional derivative}, J. Comput. Appl. Math. 264(2014) 65--70.

\bibitem{key-8} Kilbas, A. A., Srivastava, H.M., and Trujillo, J.J., \emph{Theory and Applications of Fractional Differential Equations}, Elsevier B.V., Amsterdam, Netherlands, 2006.

\bibitem{letter1} G. W. Leibniz, \emph{``Letter from Hanover, Germany to G.F.A. L'Hospital, September 30, 1695", Leibniz Mathematische Schriften}, Olms-Verlag,
	Hildesheim, Germany, 1962, p.301-302, First published in 1849. 

\bibitem{letter2} G. W. Leibniz, \emph{``Letter from Hanover, Germany to Johann Bernoulli, December 28, 1695'', Leibniz Mathematische Schriften}, Olms-Verlag,
  Hildesheim, Germany, 1962, p.226, First published in 1849.

\bibitem{letter3} G. W. Leibniz, \emph{``Letter from Hanover, Germany to John Wallis, May 28, 1697'', Leibniz Mathematische Schriften}, Olms-Verlag, 
  Hildesheim, Germany, 1962, p.25, First published in 1849. 
	
\bibitem{what} Machado, J. A., \emph{And I say to myself: ``What a fractional world!"}, Frac. Calc. Appl. Anal. 14(4)(2011) 635-654.	

%\bibitem{key-11} Miller, K.S. and Ross, B., \emph{An introduction to the fractional calculus and fractional diffrential equations}, Wiley, New York, 1993.

\bibitem{key-12} Oldham, K. B. and Spanier, J., \emph{The fractional calculus}, Academic Press, New York, 1974.

\bibitem{key-2} Podlubny, I., \emph{Fractional Differential Equations}, Academic Press,
San Diego, California, U.S.A., 1999.

\bibitem{key-9} Samko, S.G., Kilbas, A.A., and Marichev, O.I., \emph{Fractional Integrals and Derivatives: Theory and Applications}, Gordon and Breach, Yverdon et alibi, 1993.

\end{thebibliography}
%\bibliographystyle{amsplain} %natbib
%    Insert the bibliography data here.

\end{document}